\newcounter{qrr@oldeq}
\newcounter{qrr@oldsubeq}
\newcounter{qrr@realeq}
\renewenvironment{subequations}{%
  \refstepcounter{equation}%
  \protected@edef\theparentequation{\theequation}%
  \setcounter{parentequation}{\value{equation}}%
  \setcounter{equation}{0}%
  \def\theequation{\theparentequation\alph{equation}}%
  \ignorespaces
}{%
  \setcounter{qrr@oldeq}{\value{parentequation}}%
  \setcounter{qrr@oldsubeq}{\value{equation}}%
  \setcounter{equation}{\value{parentequation}}%
  \ignorespacesafterend
}
\newenvironment{subequations*}{%
  \setcounter{qrr@realeq}{\value{equation}}%
  \let\theparentequation\theequation%
  \patchcmd{\theparentequation}{equation}{parentequation}{}{}%
  \setcounter{parentequation}{\numexpr\value{qrr@oldeq}-1}%
  \setcounter{equation}{\value{qrr@oldsubeq}}%
  \def\theequation{\theparentequation\alph{equation}}%
  \refstepcounter{parentequation}%
  \ignorespaces
}{%
  \setcounter{qrr@oldeq}{\value{parentequation}}%
  \setcounter{qrr@oldsubeq}{\value{equation}}%
  \setcounter{equation}{\value{qrr@realeq}}%
  \ignorespacesafterend
}
\newtheorem{clm}{Claim}
\newtheorem{prop}[clm]{Proposition}
{ \theoremstyle{remark} \newtheorem{rem}[clm]{Remark}}
\newtheorem{defn}[clm]{Definition}
{\theoremstyle{definition} \newtheorem{ex}[clm]{Example}}
 \newcommand{\RR}{\mathbb{R}}
\newcommand{\ZZ}{\mathbb{Z}}
\DeclarePairedDelimiter{\bracepair}{\lbrace}{\rbrace}
\DeclarePairedDelimiter{\anglepair}{\langle}{\rangle}
\DeclarePairedDelimiter{\parenpair}{(}{)}
\DeclarePairedDelimiter{\vertpair}{\vert}{\vert}
\DeclarePairedDelimiter{\Vertpair}{\Vert}{\Vert}
\newcommand{\abs}[1]{\vertpair*{#1}}
\newcommand{\rnin}[2]{\anglepair*{#1, #2}}
\newcommand{\foursh}[1]{\widetilde{#1}}
\newcommand{\charfcn}[1]{\chi_{#1}}
\newcommand{\asimplex}[1]{\sigma \parenpair*{#1}}
\newcommand{\ansimplex}[2]{\sigma \parenpair*{#2;\, #1}}
\newcommand{\lowconst}{\mathcal{D}_1(n)}
\newcommand{\highconst}{\mathcal{D}_2(n)}
\newcommand{\card}[1]{\# \bracepair*{#1}}
\begin{document}
\normalem
\everymath{\displaystyle}

\begin{abstract}
It is shown that two conditions $f(a + \cdot) - f(\cdot)\in L^p(\RR)$, and $(\sin b\cdot) f(\cdot) \in L^p(\RR)$ guarantee $f \in L^p(\RR)$, $1 \leq p < \infty$, if and only if $ab$ is not in $(\pi \ZZ)$.
\end{abstract}

\title{Shifts of a Measurable Function and Criterion of $p$-integrability}
\author{Boris S. Mityagin}
\email{mityagin.1@osu.edu}
\address{231 West 18th Avenue, The Ohio State University, Columbus, OH 43210}
\maketitle

\footnotetext{Submitted to Journal of Approximation Theory}

\epigraph{\foreignlanguage{russian}{Когда б вы знали, из какого сора\\
      Растут стихи, не ведая стыда,\\
      Как желтый одуванчик у забора,\\
      Как лопухи и лебеда.}}{\foreignlanguage{russian}{А. Ахматова, `Тайны ремесла'}\footnotemark}
      \footnotetext{English translation is given in Section~\ref{sect:translation}}

\section{}  Let a measurable function $f$ on $\RR = (-\infty, \infty)$ have properties
\begin{subequations}
\begin{align}
\forall \, t \in \RR, && f(t + \cdot) - f(\cdot) \in L^2(\RR), \label{eq:shiftsin}\\
\intertext{and}
\forall \, s \in \RR, && \sin (s \cdot) f(\cdot) \in L^2(\RR). \label{eq:sinesin}
\end{align}
\end{subequations}
If a Fourier transform $\foursh{f}$ is reasonably defined then \eqref{eq:sinesin} is equivalent to \eqref{eq:shiftsin} for $\foursh{f}$.
\begin{clm} \label{clm:shifts} Under conditions \eqref{eq:shiftsin}, \eqref{eq:sinesin} we have $
f \in L^2(\RR)$.
\end{clm}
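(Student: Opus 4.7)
The plan is to combine \eqref{eq:sinesin} at $s = 1$ with \eqref{eq:shiftsin} at $t = \pi/2$, exploiting the identity $\sin(x + \pi/2) = \cos(x)$ to replace a ``$\sin$-weight'' by a ``$\cos$-weight''. From \eqref{eq:sinesin} with $s = 1$ we have $\sin(x)\, f(x) \in L^2(\RR)$. From \eqref{eq:shiftsin} with $t = \pi/2$ we have $h(x) \declare f(x + \pi/2) - f(x) \in L^2(\RR)$; since $\abs{\sin x} \le 1$, the function $\sin(x)\, h(x)$ is also in $L^2(\RR)$, so
\[
\sin(x)\, f(x + \pi/2) \;=\; \sin(x)\, f(x) + \sin(x)\, h(x) \;\in\; L^2(\RR).
\]
After the translation--invariant substitution $y = x + \pi/2$ the left side becomes $-\cos(y)\, f(y)$, so $\cos(\cdot)\, f(\cdot) \in L^2(\RR)$.

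Combining this with $\sin(\cdot)\, f(\cdot) \in L^2(\RR)$ and using the Pythagorean identity pointwise,
\[
\abs{f(y)}^2 \;=\; \sin^2(y)\, \abs{f(y)}^2 + \cos^2(y)\, \abs{f(y)}^2,
\]
we conclude $\abs{f}^2 \in L^1(\RR)$, i.e.\ $f \in L^2(\RR)$. There is no genuine obstacle here: the entire argument rests on the shift identity $\sin(x + \pi/2) = \cos(x)$ together with $\sin^2 + \cos^2 = 1$, and it uses only a single shift $t = \pi/2$ and a single frequency $s = 1$, so the universal quantifiers in \eqref{eq:shiftsin}--\eqref{eq:sinesin} are not actually needed. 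The fact that the chosen pair satisfies $s t = \pi/2 \notin \pi\ZZ$ is already visible in this special case, which foreshadows the sharp criterion $ab \notin \pi\ZZ$ advertised in the abstract.
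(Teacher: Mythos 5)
Your proof is correct, and it takes a genuinely different route from the paper's. The paper's argument (Claim~\ref{clm:specialshift}) decomposes the line into a small neighborhood $E$ of $\pi\ZZ$ and its complement: on $E^{\complement}$ one multiplies $\sin(x)f(x)$ by the bounded function $1/\sin x$ to get $f \in L^2(E^{\complement})$, and on $E$ one writes $f(x) = f(x+\tfrac{\pi}{2}) - \Delta(x)$ and uses $E + \tfrac{\pi}{2} \subset E^{\complement}$. You avoid any set decomposition: multiplying the shift difference by the bounded function $\sin x$, translating, and using $\sin(x+\tfrac{\pi}{2}) = \cos x$ gives $\cos(\cdot)f(\cdot) \in L^2$, after which $\int \abs{f}^2 = \int \sin^2\abs{f}^2 + \int \cos^2\abs{f}^2 < \infty$ is a legitimate identity for nonnegative measurable functions. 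For Claim~\ref{clm:shifts} itself your version is arguably cleaner --- no cutoff set, no numerical constants. What the paper's decomposition buys is a proof that transfers verbatim to an arbitrary shift $T \notin \pi\ZZ$ and to a general Banach space $X$ with properties \eqref{eq:genershifts}--\eqref{eq:generbdds} (Proposition~\ref{prop:generelltwo}), where splitting the line according to where $\abs{\sin}$ is bounded below is the natural move. Your trigonometric-identity route generalizes too --- from $\sin(\xi-T)F(\xi) \in X$ and $\sin(\xi-T) = \sin\xi\cos T - \cos\xi\sin T$ with $\sin T \neq 0$ one gets $\cos(\cdot)F(\cdot) \in X$, and then $F = \sin\xi\cdot(\sin\xi\, F) + \cos\xi\cdot(\cos\xi\, F) \in X$ --- but note that in that setting this last multiplication form must replace your pointwise Pythagorean integration step, which is specific to $L^2$.
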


Recently, A. M. Vershik brought attention of the 25th St. Petersburg Summer Meeting in Mathematical Analysis, June 25 -- 30, 2016, to Claim~\ref{clm:shifts}.  He recalled that the known proof ``was done in terms of representation theory (of Heisenberg group) many years ago'' but noted that ``the simple proof still does not exist'' and after many years it is important ``to give a simple and direct proof.''\footnote{The presentation \cite{Vershik} gives a more extended motivation and links to the uncertainty principle although no reference to a published source is given.}
  A stronger form of Claim~\ref{clm:shifts} and its elementary proof was given just during the Meeting's session of A. Vershik's talk on June 30.  It is presented in Section~\ref{sect:proof}.  If the reader wants a proof only of Claim~\ref{clm:shifts}, there is no need to go beyond Section~\ref{sect:proof}.   

\section{} \label{sect:proof}
\begin{clm} \label{clm:specialshift} Let $f$ be a measurable function on $\RR$, and the following two conditions hold:
\begin{subequations}
\begin{gather}
\Delta(x) = f\left(x + \frac{\pi}{2} \right) - f(x) \in L^2(\RR). \label{eq:specificshift} \\
f(x) \cdot \sin(x) \in L^2(\RR) \label{eq:specificsine}
\end{gather}
\end{subequations}
Then $f \in L^2(\RR)$.
\end{clm}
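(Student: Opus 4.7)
The plan is to turn the hypothesis on the shift into a second multiplicative statement that complements \eqref{eq:specificsine}, and then use the Pythagorean identity $\sin^2 + \cos^2 = 1$ to recover $|f|^2$ pointwise from the two multiplicative bounds.

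More precisely, I would multiply $\Delta(x)$ by $\cos(x)$. Since $\cos$ is bounded and $\Delta \in L^2(\RR)$ by \eqref{eq:specificshift}, the product
\[
\cos(x)\,\Delta(x) \;=\; \cos(x)\,f\!\left(x+\tfrac{\pi}{2}\right) - \cos(x)\,f(x)
\]
lies in $L^2(\RR)$. The point of choosing $\cos$ is the identity $\cos(x) = \sin\!\left(x+\tfrac{\pi}{2}\right)$, which means that
\[
\cos(x)\,f\!\left(x+\tfrac{\pi}{2}\right) \;=\; \bigl[\sin(y)\,f(y)\bigr]_{y = x + \pi/2}
\]
is merely a translate of the $L^2$ function appearing in \eqref{eq:specificsine}, hence itself in $L^2(\RR)$ (the Lebesgue measure is translation invariant). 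Subtracting, we conclude $\cos(x)\,f(x) \in L^2(\RR)$.

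Combining with the hypothesis \eqref{eq:specificsine} and using $\sin^2 x + \cos^2 x = 1$ gives
\[
\int_{\RR} |f(x)|^2 \, dx \;=\; \int_{\RR} |f(x)|^2 \sin^2(x)\, dx + \int_{\RR} |f(x)|^2 \cos^2(x)\, dx \;<\; \infty,
\]
which is exactly $f \in L^2(\RR)$. There is no real obstacle here: the only delicate point is to check that the two ingredients — the shift hypothesis for $f$ and the multiplicative hypothesis against $\sin(x)$ — interact correctly, and this is precisely the role of the relation $\cos(x) = \sin(x+\pi/2)$ together with translation invariance of the $L^2$ norm.
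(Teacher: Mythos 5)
Your proof is correct, but it takes a genuinely different route from the paper's. The paper argues by localization: it takes a small neighborhood $E$ of the zero set $\pi\ZZ$ of $\sin$, observes that $\frac{1}{\sin x}$ is bounded on $E^{\complement}$ so that $f$ restricted to $E^{\complement}$ is in $L^2$, and then uses $f(x) = f\left(x+\frac{\pi}{2}\right) - \Delta(x)$ together with $E + \frac{\pi}{2} \subset E^{\complement}$ to control $f$ on $E$. You instead stay global: from $\sin\left(x+\frac{\pi}{2}\right) = \cos(x)$ and translation invariance of Lebesgue measure you get $\cos(x)\, f\left(x+\frac{\pi}{2}\right) \in L^2$, subtract $\cos(x)\Delta(x) \in L^2$ to get $\cos(x) f(x) \in L^2$, and finish with $\sin^2 + \cos^2 = 1$. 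Every step checks out: all products are measurable, and the identity $\abs{f}^2 = \abs{f\sin}^2 + \abs{f\cos}^2$ holds pointwise. What the paper's localization buys is that it transfers essentially verbatim to an arbitrary shift $T \notin \pi\ZZ$ (Proposition~\ref{prop:generelltwo}) and to the multidimensional examples, since all it uses is that the shift moves the bad set $E$ into $E^{\complement}$; your argument as written is tied to the exact quarter-period relation $\sin\left(x+\frac{\pi}{2}\right) = \cos(x)$. It does extend, though: for a general shift $T$ the addition formula gives $\sin(x+T) f(x) = \cos T \cdot \sin(x) f(x) + \sin T \cdot \cos(x) f(x)$, and one can solve for $\cos(x) f(x)$ precisely when $\sin T \neq 0$, i.e. $T \notin \pi\ZZ$, so your method also recovers the sharp quantization condition. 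For $L^p$ or the paper's general space $X$ one would replace the Pythagorean step by writing $f = (f\sin)\cdot\sin + (f\cos)\cdot\cos$ and invoking multiplication by the bounded functions $\sin$ and $\cos$.
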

\begin{proof}
Put $E = \bracepair{x: \abs{x - k\pi} \leq 10^{-6} \, \text{ for some } \, k \in \ZZ}$.
Then $
\abs{\frac{1}{\sin x}} \leq \frac{1}{\sin \delta} \leq 10^7 \text{ on } E^{\complement} = \RR \setminus E, \quad \delta = 10^{-6},$
so
\begin{equation} \label{eq:foncompisell2}
\left. f \right\vert {E^{\complement}} = (\sin x \cdot f(x) ) \cdot \frac{1}{\sin x} \in L^2(E^{\complement}).
\end{equation}
With $2\delta < \frac{\pi}{2}$ we have $E + \frac{\pi}{2} \subset E^{\complement}$ and $f(x) = f(x + \frac{\pi}{2}) - \Delta(x)$ for $x \in E$;
therefore $\Vertpair{f \vert E} \leq \Vertpair{f \vert E^{\complement}} + \Vertpair{\Delta} < \infty$,
and together with \eqref{eq:foncompisell2} and \eqref{eq:specificshift} we have $f \in L^2(\RR)$.
\end{proof}

\section{} Section~\ref{sect:proof} is an almost stenographic recording of what I have said at the Meeting's June 30 session.  Now we will talk about a more general setting (sorry, some repetition is unavoidable) and get negative results (Proposition~\ref{prop:genconclusion} and Example~\ref{example:lattices}) as well.  Of course, $L^2$--norm is not special in our analysis in Section~\ref{sect:proof}.  Instead of $L^2$ we can talk about any Banach space $X$ of measurable functions on $\RR$ with two properties:
\begin{subequations}
\begin{align}
g \in X & \Rightarrow g(\cdot + t) \in X, \quad t \in \RR \label{eq:genershifts}\\
g \in X & \Rightarrow g \cdot h \in X, \quad \forall h \in L^{\infty} (\RR). \label{eq:generbdds}
\end{align}
\end{subequations}
Moreover, we do not need global conditions \eqref{eq:shiftsin}, \eqref{eq:sinesin}; just a pair $(t; s) = \left( \frac{\pi}{2}; 1 \right)$ with \eqref{eq:specificshift}, \eqref{eq:specificsine} holding was good enough for the proof in Section~\ref{sect:proof}.  More general than Claim~\ref{clm:specialshift} is true:
\begin{prop} \label{prop:generelltwo}
Let $X$ be a Banach space of measurable functons on $\RR$ with properties \eqref{eq:genershifts}, \eqref{eq:generbdds} and
\begin{subequations*}
\begin{align} \label{eq:shiftall}
g \in X \Rightarrow g(a \cdot) \in X, \quad  \forall a \neq 0.
\end{align}
\end{subequations*}
Let $(t, s)$ be two real non-zero numbers such that
\begin{equation} \label{eq:nonsingular}
st \neq k \pi, k \in \ZZ.
\end{equation}
If a measurable function $f$ on $\RR$ satisfies conditions
\begin{subequations}
\begin{align}
\Delta_t(x) = f(t + x) - f(x) \in X, \quad \text{ and } \label{eq:deltatshift}\\
(\sin sx) \cdot f(x) \in X, \label{eq:sinesshift}
\end{align}
\end{subequations}
then 
\begin{equation} \label{eq:banachconclusion}
f \in X.
\end{equation}
\end{prop}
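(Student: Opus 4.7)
The approach is to reduce Proposition~\ref{prop:generelltwo} to the argument already given for Claim~\ref{clm:specialshift}. First, I would use the dilation property \eqref{eq:shiftall} with $a = 1/s$ to normalize $s = 1$: setting $F(y) \declare f(y/s)$ and $u \declare st$, the hypotheses \eqref{eq:deltatshift}--\eqref{eq:sinesshift} become $F(\cdot + u) - F(\cdot) \in X$ and $(\sin y) F(y) \in X$, while the non-resonance condition \eqref{eq:nonsingular} becomes the geometrically transparent statement $u \notin \pi\ZZ$. Once $F \in X$ is established, a second application of \eqref{eq:shiftall} returns $f \in X$.

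Next, I would repeat the two-region splitting from the proof of Claim~\ref{clm:specialshift}, but with the specific cutoff $10^{-6}$ replaced by a parameter adapted to $u$. Pick $0 < \delta < \tfrac{1}{2} \operatorname{dist}(u, \pi\ZZ)$ and set $E = \bracepair{y \in \RR : \abs{y - k\pi} < \delta \text{ for some } k \in \ZZ}$. On $E^{\complement}$ the reciprocal $1/\sin y$ is bounded by $1/\sin \delta$, so the identity $F = [(\sin y) F] \cdot (1/\sin y)$ together with \eqref{eq:generbdds} places $F \cdot \charfcn{E^{\complement}}$ in $X$. For $y \in E$ the choice of $\delta$ guarantees $y + u \in E^{\complement}$, so $F(y+u) \charfcn{E}(y)$ equals $(F \cdot \charfcn{E^{\complement}})(y+u) \cdot \charfcn{E}(y)$, which lies in $X$ by \eqref{eq:genershifts}--\eqref{eq:generbdds}. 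Writing $F \cdot \charfcn{E} = F(\cdot + u) \charfcn{E} - [F(\cdot + u) - F(\cdot)] \charfcn{E}$ and using the hypothesis on the shift-difference then places $F \cdot \charfcn{E}$ in $X$; combining the two regions gives $F \in X$.

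The only step that is not a mechanical transcription of Claim~\ref{clm:specialshift} is verifying the inclusion $E + u \subset E^{\complement}$; this inclusion fails for \emph{every} $\delta > 0$ exactly when $u$ is a multiple of $\pi$, which is precisely when \eqref{eq:nonsingular} is violated. Hence I expect no genuine obstacle in the proof itself, only the bookkeeping of choosing $\delta$ correctly as a function of $\operatorname{dist}(st, \pi\ZZ)$. I anticipate that the sharpness of \eqref{eq:nonsingular} will be addressed by the negative results (Proposition~\ref{prop:genconclusion} and Example~\ref{example:lattices}) promised earlier in this section.
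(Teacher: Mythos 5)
Your proof is correct and follows essentially the same route as the paper: rescale via \eqref{eq:shiftall} to reduce to the pair $(st;1)$, split $\RR$ into a $\delta$-neighborhood $E$ of $\pi\ZZ$ and its complement, divide by $\sin$ off $E$, and use $E+st\subset E^{\complement}$ together with the shift-difference to recover $F$ on $E$. The only cosmetic difference is that the paper recovers $F\cdot\charfcn{E}$ by showing $F(\xi)\sin(\xi-T)\in X$ and then dividing by $\sin(\xi-T)$ on $E$, whereas you restrict the shifted function to $E$ directly, exactly as in the proof of Claim~\ref{clm:specialshift}; both versions are valid.
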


\section{}

\begin{proof}[Proof of Proposition~\ref{prop:generelltwo}]
The assumption \eqref{eq:shiftall} permits us to rescale a variable $x$ and go to $F(\xi) = f\parenpair*{\frac{\xi}{s}}$.  It brings us to the pair $(T; 1) = (ts; 1)$ instead of $(t, s)$, so we need to prove that $F \in X$ under the assumptions
\begin{subequations}
\begin{align}
F(\xi + T) - F(\xi) \in X \label{eq:bigFshifts}
\intertext{and}
\sin \xi \cdot F(\xi) \in X, \label{eq:bigFsine}
\end{align}
\end{subequations}
with
\begin{equation} \label{eq:tnotpi}
T \neq k \pi, \quad k \in \ZZ.
\end{equation}
We can choose $\tau$ and $m$ such that 
\begin{equation} \label{eq:tshiftdef}
T = m \pi + \tau, \quad m \in \ZZ, \quad 0 < \tau < \pi,
\end{equation}
and $\delta > 0$,
\begin{equation} 
2 \delta \leq \tau \leq \pi - 2\delta. \label{eq:deltasize}
\end{equation}
Put $E = \bigcup_{k \in \ZZ} [k\pi - \delta; k \pi + \delta]$ and
\begin{equation} \label{eq:hdef}
h(\xi) = \begin{cases} \frac{1}{\sin(\xi)}, & \xi \in E^{\complement} = \RR \setminus E,\\
0, & \xi \in E \end{cases}
\end{equation}
so $h \in L^{\infty}$ with $\Vertpair{h \vert L^{\infty}} \leq \frac{1}{\sin \delta}$.  Then
\begin{equation} \label{eq:FonEcomp}
F(\xi) \cdot \charfcn{E^{\complement}} = \left[ \sin \xi \cdot F(\xi) \right] \cdot h(\xi)
\end{equation}
is in $X$ by \eqref{eq:bigFsine} and \eqref{eq:generbdds}.  \eqref{eq:deltasize} guarantees that $E + T = E + \tau \subset E^{\complement}$.  
Now we use \eqref{eq:bigFshifts} and the identity $F(\xi) = F(\xi + T) - \Delta(\xi)$, with $\Delta(\xi) = F(\xi + T) - F(\xi)$ in $X$ by \eqref{eq:bigFshifts} to conclude
that both $F(\xi + T) \sin \xi = F(\xi) \sin \xi + \Delta (\xi) \sin \xi$ and its shift
\begin{equation} \label{eq:critshift}
F(\xi) \sin(\xi - T)
\end{equation}
are in $X$.  
By \eqref{eq:tshiftdef}, \eqref{eq:deltasize}  $\min_{\xi \in E} \abs{\sin(\xi - T)} = \min_{\xi \in E} \abs{\sin(\xi - \tau)} \geq \sin \delta.$  Put --- compare \eqref{eq:hdef} ---
\begin{equation*} \label{eq:HHdef}
H(\xi) = \begin{cases} \frac{1}{\sin(\xi - T)}, &\quad \xi \in E\\
0, &\quad \xi \in E^{\complement} \end{cases}
\end{equation*}
so $H \in L^{\infty}(\RR)$, $\Vertpair{H \vert L^{\infty}} \leq \frac{1}{\sin \delta}$.    Therefore,
\begin{equation} \label{eq:FonE}
F(\xi) \cdot \charfcn{E}(\xi) = [\sin(\xi - T) F(\xi) ] \cdot H(\xi)
\end{equation}
and by \eqref{eq:critshift} and \eqref{eq:bigFsine} the function \eqref{eq:FonE} is in $X$.  Together with \eqref{eq:FonEcomp} this observation completes the proof.
\end{proof}

\section{}  This is worth to notice that the quantization condition \eqref{eq:nonsingular} or \eqref{eq:tnotpi} is crucial.  Indeed, for a pair $(T, 1)$, $T = \pi$, now we'll construct a function $f(x)$ such that
\begin{subequations}
\begin{align}
\int \abs{f(x)}^2 \, dx & = \infty. \label{eq:notelltwo}\\
f(x) \cdot \sin x = g(x) &\in L^2 \label{eq:onesine}\\
f(x) - f(x + \pi) & \in L^2 \label{eq:oneshift}
\end{align}
\end{subequations} 
It will be bad for any $T = m \pi$, $m \in \ZZ$, of course.
Put 
\begin{equation} \label{eq:akdef}
a_0 = \frac{1}{4}, \qquad a_k =  \frac{1}{5\abs{k}}, \quad k \neq 0
\end{equation}
and 
\begin{equation} \label{eq:fdef}
f(x) = \begin{cases} 1, & x \in I_k, \quad k \in \ZZ\\
0, & \text{otherwise.} \end{cases}
\end{equation}
Then by \eqref{eq:akdef}, \eqref{eq:fdef}
\[
\int \abs{f(x)}^2 \, dx = \sum_{k \in \ZZ} \int_0^{a_k} 1 \, dt = \infty,
\]
but with
\begin{equation} \label{eq:ineqs}
\sin x \leq x, \quad 0 \leq x \leq \frac{1}{4}, \, \text{ and } \abs{\sin(x + j\pi)} = \abs{\sin x} \, \, \forall j \in \ZZ,
\end{equation}
\[
\begin{split}
\int \abs{g(x)}^2 \, dx &= \sum_{k \in \ZZ} \int_0^{a_k} (\sin t)^2 \, dt \\
& \leq \sum_{k \in \ZZ} \int_0^{a_k} t^2 \, dt = \frac{1}{3} \sum_{k \in \ZZ} a_k^3 < \infty.
\end{split}  
\]
We still need to check \eqref{eq:oneshift}.  First, we analyze the case $x \geq 0$, $k \geq 0$.  

On $I_k \cap (I_{k + 1} - \pi) = I_{k + 1} - \pi$
\begin{equation} \label{eq:pishiftcancel}
\begin{split}
f(x) - f(x + \pi) = 0
\end{split}
\end{equation}
but on $I_k \setminus( I_{k + 1} - \pi) = k\pi + [a_{k + 1}, a_k]$
\[
f(x)- f(x + \pi) = f(x) = 1,
\]
and equals $0$ otherwise.  If $x < 0, k < 0$, $(I_{k + 1} - \pi) \supset I_k$ so 
\[
f(x) - f(x + \pi) = \begin{cases} 0 & \text{ on } I_k,\\
-1 & \text{ on } (I_{k + 1} - \pi) \setminus I_k = k\pi + [a_k, a_{k + 1}],\\
0 & \text{otherwise.} \end{cases}
\]

Therefore, by \eqref{eq:akdef}
\[
\begin{split}
\int \abs{f(x) - f(x + \pi)}^2 \, dx & \leq 2 \sum_{k \geq 0} \left[ \int_{a_{k + 1}}^{a_k} 1 \, dt \right]\\
& = 2 a_0 = \frac{1}{2}.
\end{split}
\]

\begin{rem} \label{rem:ellptoo} The same example, i.e., a function \eqref{eq:fdef}, is good to show that \eqref{eq:onesine}, \eqref{eq:oneshift}, \eqref{eq:notelltwo} holds if $L^2$ is changed to $L^p(\RR)$, $1 \leq p < \infty$.
\end{rem}
The function \eqref{eq:fdef} gives a counterexample for a pair $(\pi, 1)$, so it is good for any $(t, s) \in (\pi \ZZ) \times \ZZ$ if $t \neq 0$, $s \neq 0$.  Together with Proposition~\ref{prop:generelltwo} this observation implies the following.
\begin{prop} \label{prop:genconclusion}
Let $X = L^p(\RR)$, $1 \leq p < \infty$.  Then conditions \eqref{eq:deltatshift}, \eqref{eq:sinesshift} imply \eqref{eq:banachconclusion} if and only if $ts \not\in \pi \ZZ$.  
\end{prop}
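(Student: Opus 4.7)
The plan is to split the biconditional and derive each implication from material already in hand, reducing everything to Proposition~\ref{prop:generelltwo} and the example \eqref{eq:fdef}.

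Forward direction ($ts\notin\pi\ZZ \Rightarrow$ the implication holds). The Banach space $X=L^p(\RR)$, $1\le p<\infty$, satisfies the three structural hypotheses of Proposition~\ref{prop:generelltwo}: translation invariance \eqref{eq:genershifts} and scaling invariance \eqref{eq:shiftall} follow from the usual change-of-variable identities $\Vertpair{g(\cdot+t)}_p=\Vertpair{g}_p$ and $\Vertpair{g(a\cdot)}_p=\abs{a}^{-1/p}\Vertpair{g}_p$, while closure under multiplication by $L^\infty$ \eqref{eq:generbdds} is just $\Vertpair{gh}_p\le\Vertpair{h}_\infty\Vertpair{g}_p$. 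Under the hypotheses $t,s\ne0$ and $ts\notin\pi\ZZ$, Proposition~\ref{prop:generelltwo} applies directly and yields $f\in L^p(\RR)$.

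Reverse direction (if $ts\in\pi\ZZ$, the implication fails). Write $ts=m\pi$ with $m\in\ZZ$. Treat first the nondegenerate case $t,s\ne0$, so $m\ne0$. I would dilate the function $f$ from \eqref{eq:fdef}: by the construction of Section~5 together with Remark~\ref{rem:ellptoo}, this $f$ satisfies $f\notin L^p$, $f(x)\sin x\in L^p$, and $f(\cdot+\pi)-f(\cdot)\in L^p$. Set $F(x)\declare f(sx)$. Scaling immediately gives $F\notin L^p$ and $F(x)\sin(sx)=(f\cdot\sin)(sx)\in L^p$. For the shift condition,
\[
F(x+t)-F(x)=f(sx+ts)-f(sx)=f(sx+m\pi)-f(sx),
\]
so by the $|s|^{-1/p}$-isometry under scaling it is enough to check $f(\cdot+m\pi)-f(\cdot)\in L^p$.

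The main (modest) obstacle is this last step: passing from a single shift by $\pi$ to a shift by $m\pi$. I would telescope: for $m\ge1$,
\[
f(y+m\pi)-f(y)=\sum_{j=0}^{m-1}\bigl[f(y+(j+1)\pi)-f(y+j\pi)\bigr],
\]
and each summand is a translate of $f(\cdot+\pi)-f(\cdot)$, hence in $L^p$ by \eqref{eq:genershifts}; the triangle inequality delivers the sum in $L^p$. The case $m\le-1$ is symmetric. This completes the nondegenerate case. The remaining boundary possibilities $ts=0$, i.e.\ $t=0$ or $s=0$, are trivial, since then one of the two conditions \eqref{eq:deltatshift}, \eqref{eq:sinesshift} reads $0\in L^p$ automatically; the same $F(x)=f(sx)$ works when $t=0$, and the constant $F\equiv 1$ works when $s=0$.
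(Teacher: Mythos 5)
Your proof is correct and follows essentially the same route as the paper: the forward direction is Proposition~\ref{prop:generelltwo} applied to $X=L^p(\RR)$, and the reverse direction uses the counterexample \eqref{eq:fdef} (valid in $L^p$ by Remark~\ref{rem:ellptoo}) together with trivial functions for the degenerate cases $ts=0$. Your explicit rescaling $F(x)=f(sx)$ and the telescoping of the $\pi$-shift into an $m\pi$-shift merely spell out a reduction the paper leaves implicit (``it is good for any $(t,s)\in(\pi\ZZ)\times\ZZ$'' and ``it will be bad for any $T=m\pi$, of course'').
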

\begin{proof}
To complete the proof we need to give counterexamples when $ts = 0$.  If $t = 0$ the condition \eqref{eq:deltatshift} trivially holds for any measurable $f$, and \eqref{eq:sinesshift} is satisfied if we put
\[
f(x) = \begin{cases} \frac{1}{x}, &\quad 0 < x < 1 \vspace{0.5 ex} \\
0, & \text{otherwise.} \end{cases}
\]
But $f$ is not in $X = L^p$.  If $s = 0$ put $g(x) = (1 + \abs{x})^{-a}$, $\frac{1}{p} - 1 < a < \frac{1}{p}$, or just $g(x) = 1$; then \eqref{eq:deltatshift}, \eqref{eq:sinesshift} hold for $g$ but $g$ is not in $X = L^p$.
\end{proof}

\section{}  With some adjustments, we can give the multidimensional analogs of Propositions~\ref{prop:generelltwo} and  \ref{prop:genconclusion}.

Let $Y$ be a Banach space of measurable functions on $\RR^n$ with properties
\begin{subequations}
\begin{align}
g \in Y & \Rightarrow g(x + t) \in Y \quad \forall t \in \RR^n \label{eq:multidshift}\\
g \in Y & \Rightarrow g \cdot h \in Y, \quad \forall h \in L^{\infty}(\RR^n) \label{eq:multidmult}\\
g \in Y & \Rightarrow g(Cx) \in Y \quad \text{for any $(n \times n)$--matrix $C$, } \det C \neq 0. \label{eq:multiddilate}
\end{align}
\end{subequations}
\begin{defn}
We say that a pair $A, B \subset \RR^n$ of subsets is \textit{assertive in $Y$} if two conditions on a measurable function $F$ on $\RR^n$
\begin{subequations}
\begin{align}
F(\cdot + t) - F(\cdot) \in Y, \quad \forall t \in A, \label{eq:ashifts}\\
F(x) \cdot (\sin \rnin{x}{b}) \in Y, \quad \forall b \in B \label{eq:bsines}\\
\intertext{imply}
F(x) \in Y.  \label{eq:resulting}
\end{align}
\end{subequations} 
\end{defn} 
\begin{ex} \label{example:singletons} Take two singletons $A = \bracepair{a}$, $B = \bracepair{b}$, $a, b \in \RR^n$.  \textit{They are assertive if a scalar product $t \equiv \rnin{a}{b} \neq \pi m$, $m \in \ZZ$.}  Indeed, with $b \neq 0$ we can rescale by \eqref{eq:multiddilate} a variable $x$ and assume
\begin{equation} \label{eq:multidStandardized}
b = e_1, a = te_1 + a^{\prime}, \quad \rnin{a^{\prime}}{e_1} = 0.  
\end{equation}
With $\delta = \frac{1}{4} \tau$, $\tau = \min \bracepair*{ \abs{t - k \pi}: k \in \ZZ}$ define
\[
E = \bracepair*{x \in \RR^n: \abs{(\sin x_1)} < \delta}
\]
and notice that $E + a \subseteq E^{\complement}$.  We can repeat (with proper adjustment) the proof of Proposition~\ref{prop:generelltwo}.

If, however, $t \equiv \rnin{a}{b} = m\pi$, $m \in \ZZ$, $A$, $B$ are not assertive in $L^p(\RR^n)$, $1 \leq p < \infty$  -- see Claim~\ref{clm:singletons} below.
\end{ex} 
\section{} 
\begin{ex} \label{example:lattices}
Now take
\[
A = \alpha \ZZ^n, B = \beta \ZZ^n.
\]
\textit{This pair is assertive in $L^p(\RR^n)$, $1 \leq p < \infty$, if and only if $t = \alpha \cdot \beta \not\in \pi \ZZ$.}

Just two points $\alpha e_1 \in A$, $\beta e_1 \in B$ guarantee (after Example~\ref{example:singletons}) that $A$, $B$ are assertive if $t \neq \pi m$, $m \in \ZZ$.  

Now we will construct a bad function $F$ for $\alpha = \pi$, $\beta = 1$, i.e., such $F$ on $\RR^n$ that
\begin{subequations} \label{eq:multidBad}
\begin{align}
 \int\limits_{\RR^n} \abs{F(x)}^p \, dx &= \infty, \label{eq:multidFbad}\\
F(x) \cdot (\sin x_j) &\in L^p(\RR^n), \, \, 1 \leq j \leq n \label{eq:multidFsingood}\\
\int\limits_{\RR^n} \abs{F(x) - F(x + \pi e_j)}^p \, dx&  < \infty, \, \, 1 \leq j \leq n \label{eq:multidShiftgood}
\end{align}
\end{subequations}
It will be bad for $A$ and $B$ if we will make two observations.  
\begin{equation} \label{eq:multidFsineStrong}
\text{Conditions \eqref{eq:multidFsingood} guarantee that } F(x) \cdot (\sin \rnin{b}{x}) \in L^p(\RR^n) \text{ for any } b \in \ZZ^n \tag{\ref{eq:multidFsingood}$^{\prime}$}  
\end{equation}
Indeed, by induction, one can explain that 
\[
\sin \rnin{b}{x} = \sum_{j = 1}^n Q_j^b(x) \sin(x_j),
\]
where $Q_j^b$ are trigonometric polynomials on $\RR^n$ of period $2\pi$, i.e., algebraic polynomials of $\cos x_j$, $\sin x_j$, $1 \leq j \leq n$.  They are bounded, i.e., $Q_j^b(x) \in L^{\infty}(\RR^n)$, so with 
\[
F(x) \cdot \left( \sin \rnin{b}{x} \right) = \sum_{j = 1}^n Q_j^b(x) [F(x) \cdot \sin(x_j)],
\]
\eqref{eq:multidFsingood} implies  \eqref{eq:multidFsineStrong}.  
\end{ex}
Any $a \in A$ is a finite sum of vectors $\pm \pi e_j$, $1 \leq j \leq n$, so
\begin{equation} \label{eq:multidFshiftStrong}
\text{the conditions \eqref{eq:multidShiftgood} guarantee that } F(x) - F(x + a) \in L^p(\RR^n) \, \, \forall a \in A = \pi \ZZ^n. \tag{\ref{eq:multidShiftgood}$^{\prime}$} 
\end{equation}
Now we focus on \eqref{eq:multidFbad} -- \eqref{eq:multidFshiftStrong}.  

We now present some preliminary facts about the construction blocks.  Define
\begin{equation} \label{eq:simplexdef}
\ansimplex{a}{n} = \asimplex{a} = \bracepair*{\xi \in \RR^n: 0 \leq \xi_j, \, \, 1 \leq j \leq n; \quad \sum_{j = 1}^n\xi_j \leq a}, \quad a > 0.
\end{equation}
Then
\begin{equation} \label{eq:simplexCharInt}
\int\limits_{\asimplex{a}} 1 d^n \xi = \frac{1}{n!} a^n ,
\end{equation}
and
\begin{equation} \label{eq:simplexVarInt}
\begin{split}
\int\limits_{\asimplex{a}} \xi_j^p d^n \xi &= \int_0^a t^p \, dt \int\limits_{\ansimplex{a-t}{n-1}} d^{n-1} t^{\prime} \\
& \leq \frac{1}{(n-1)!} \int_0^a t(a-t)^{n-1} \, dt \\
& = \frac{1}{(n-1)!} a^{n + 1} \int_0^1 t(1 - t)^{n - 1} \, dt = \frac{1}{(n + 1)!} a^{n + 1}.
\end{split}
\end{equation}
With notations 
\[
\kappa = (k_1, \dotsc , k_n) \in \ZZ^n, k(\kappa) \equiv k = \sum_{j = 1}^n \abs{k_j},
\]
let us observe the following:
\begin{equation}\label{eq:simplexsurfaceasymp}
\lowconst (k + 1)^{n - 1} \leq \card{\kappa \in \ZZ^n_+ : \sum_{j = 1}^n k_j = k} \leq \highconst (k + 1)^{n-1} \, \, \text{ for all } k \geq 1,
\end{equation}
where constants $0 < \lowconst < \highconst$ do not depend on $k$.
If $n = 2$ the number $\# = k + 1$.  Thus we can by induction explain \eqref{eq:simplexsurfaceasymp} for any $n$.  

Let $h(\kappa)$ be a positive function on $\ZZ^n$ such that
\begin{equation} \label{eq:hNormalize}
h(\kappa) = H(1 + k(\kappa)), \, \, \text{ where $H$ is a function on }[1, \infty).
\end{equation}  
Then
\begin{equation} \label{eq:hLowbd}
\sum_{\kappa \in \ZZ^n} h(\kappa) \geq \sum_{\kappa \in \ZZ^n_+} h(\kappa) \geq \lowconst \sum_{k = 0}^{\infty} (1 + k)^{n-1} H(1 + k),
\end{equation}
and
\begin{equation} \label{eq:hHighbd}
\sum_{\kappa \in \ZZ^n} h(\kappa) \leq 2^n \sum_{\kappa \in \ZZ_+^n} h(\kappa) \leq 2^n \highconst \sum_{k = 0}^{\infty} (1 + k)^{n-1} H(1 + k).  
\end{equation}
Now choose
\begin{equation} \label{eq:rDef}
r(\kappa) = R(1 + k), \quad R(x) = \left( \frac{1}{x} \right)^{\gamma}, \quad 1 \geq \gamma > 1 - \frac{1}{n + 1}.
\end{equation}
and define
\begin{equation} \label{eq:multidFdef}
F(x) = \begin{cases} 1, & x \in \bigcup_{\kappa \in \ZZ^n} I(\kappa), \quad I(\kappa) = \bracepair{\pi \kappa + \asimplex{r(\kappa)}},\\
0, & \text{ otherwise.} \end{cases} 
\end{equation}
We claim that \eqref{eq:multidFbad} -- \eqref{eq:multidFshiftStrong} hold.  Indeed, by \eqref{eq:simplexCharInt} and \eqref{eq:hLowbd}, \eqref{eq:rDef}
\begin{equation} \label{eq:multidFestimate}
\int\limits_{\RR^n} F(x)^p \, dx = \sum_{\kappa \in \ZZ^n} \int\limits_{\asimplex{r(\kappa)}} 1 \, dx = \sum_{\kappa \in \ZZ^n} r^n(\kappa) \cdot \frac{1}{n!} \geq \frac{\lowconst}{n!} \sum_{k = 0}^{\infty} (1 + k)^{n - 1} \left( \frac{1}{1 + k)} \right)^{\gamma n} = \infty.
\end{equation}
To check \eqref{eq:multidFsingood} let us notice that
\[
\frac{\sin t}{t} \leq 1, \quad 0 \leq t \leq \pi, \text{ and } \abs{\sin(t + \pi \ell)} = \abs{\sin t}, \, \, \forall \ell \in \ZZ.
\]
Therefore, by \eqref{eq:simplexVarInt} and \eqref{eq:hHighbd}
\begin{equation} \label{eq:multidSineestimate}
\begin{split}
\int\limits_{\RR^n} \abs{(\sin x_j) F(x)}^p \, dx & \leq \sum_{\kappa \in \ZZ^n} \int\limits_{\asimplex{r(\kappa)}} \abs{\sin(x_1)}^p \, d^n x \\
& = \frac{1}{(n + 1)!} \sum_{\kappa \in \ZZ^n} r(\kappa)^{n + 1} \\
& \leq \frac{2^n \highconst}{(n + 1)!} \sum_{k = 0}^{\infty} (1 + k)^{n-1} \cdot \left( \frac{1}{k + 1} \right)^{\gamma(n + 1)} < \infty 
\end{split} 
\end{equation}
if $n - 1 - \gamma(n + 1) < -1$, i.e., $\gamma > 1 - \frac{1}{n + 1}$.  This is the condition \eqref{eq:rDef} so \eqref{eq:multidFsingood} holds.  

Finally, by \eqref{eq:simplexCharInt} and \eqref{eq:multidFdef}, \eqref{eq:hHighbd}
\begin{equation} \label{eq:multidShiftestimate}
\begin{split}
\int\limits_{\RR^n} \abs{F(x) - F(x + \pi e_j)}^p \, dx & = \sum_{\kappa \in \ZZ^n} \frac{1}{n!} \abs{r(\kappa)^n - r^n(\kappa + \pi e_j)}\\
& \leq \frac{2^n}{n!} \sum_{\kappa \in \ZZ^n_+} \left( r(\kappa) - r(\kappa + \pi e_1)^n \right)\\
& = \frac{2^n}{n!} \sum_{\kappa \in \ZZ^n_+} \left[ R^n(1 + k(\kappa)) - R^n(k(\kappa) + 2) \right]\\
& = \frac{2^n}{n!} \sum_{\kappa \in \ZZ^n_+} \left[ \left( \frac{1}{1 + k(\kappa)} \right)^{\gamma n} - \left( \frac{1}{2 + k(\kappa)} \right)^{\gamma n} \right]\\
& \leq \frac{2^n \gamma}{(n - 1)!} \sum_{\kappa \in \ZZ^n_+} \left( \frac{1}{1 + k(\kappa)} \right)^{\gamma n + 1}\\
& \leq \frac{2^n \gamma \highconst}{(n-1)!} \sum_{k = 0}^{\infty} (1 + k)^{n - 1} \left( \frac{1}{1 + k} \right)^{\gamma n + 1} < \infty  
\end{split}
\end{equation}
if $n - 1 - (\gamma n + 1) = n(1 - \gamma) - 2 < -1,$ i.e., $\gamma > 1 - \frac{1}{n}$.  This condition follows from \eqref{eq:rDef} so \eqref{eq:multidShiftgood} holds.  

\section{}  After analysis of Example~\ref{example:lattices} we can explain the following.  
\begin{clm} \label{clm:singletons} With notation of Example~\ref{example:singletons}, if $t = \rnin{a}{b} = m \pi$, $m \in \ZZ$, there exists a measurable function $F(x)$ on $\RR^n$ such that
\begin{subequations}
\begin{align} 
\int\limits_{\RR^n} \abs{F(x)}^p \, dx &= \infty, \label{eq:singlesFbad} \\
F(x) \cdot \left( \sin \rnin{x}{b} \right) &\in L^p(\RR^n), \label{eq:singlesFsinegood} \\
\int\limits_{\RR^n} \abs{F(x) - F(x + a)}^p \, dx &< \infty. \label{eq:singlesFshiftsgood}
\end{align}
\end{subequations}
\end{clm}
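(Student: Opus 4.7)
The plan is to reduce the general singleton situation to the lattice construction of Example~\ref{example:lattices} via the dilation property \eqref{eq:multiddilate}. Assume first that both $a, b \in \RR^n$ are non-zero; the degenerate cases $a = 0$ or $b = 0$ are handled by the same kind of counterexamples used in the proof of Proposition~\ref{prop:genconclusion}, localized in the perpendicular direction by a bounded bump when necessary. The goal is to produce an invertible $n \times n$ matrix $C$ such that $\tilde a \declare Ca$ is a non-zero element of $\pi \ZZ^n$ and $\tilde b \declare C^{-T} b$ is a non-zero element of $\ZZ^n$; then, setting $G(y) \declare F(Cy)$ where $F$ is the bad function of Example~\ref{example:lattices}, will produce the desired counterexample for the pair $(a, b)$.

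The matrix $C$ is built row by row. Demanding $C^{-T} b = e_1$ amounts to declaring the first row of $C$ to be $b^T$, which automatically yields $(Ca)_1 = \rnin{a}{b} = m\pi$. If $m \neq 0$, fill the remaining $n - 1$ rows by any basis of $a^{\perp}$; this makes $\tilde a = m\pi e_1 \neq 0$, and $C$ is invertible because $b \notin a^{\perp}$. If $m = 0$, which forces $n \geq 2$ since in dimension one the product of two non-zero real numbers cannot vanish, take the second row to be $(\pi / \abs{a}^2) a^T$, so that $(Ca)_2 = \pi$, and fill the remaining $n - 2$ rows by extending $\{b\}$ (which lies in $a^{\perp}$) to a basis of $a^{\perp}$. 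This gives $\tilde a = \pi e_2 \neq 0$, and $C$ is still invertible since its second row lies outside the $(n-1)$-dimensional subspace $a^{\perp}$ containing the other rows.

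Verifying the three conditions for $G$ is then a routine change of variables $z = Cy$. Infiniteness of the $L^p$ norm follows from $\int \abs{G(y)}^p \, dy = \abs{\det C}^{-1} \int \abs{F(z)}^p \, dz = \infty$. The identity $\rnin{b}{y} = \rnin{\tilde b}{Cy} = (Cy)_1$, combined with the enhanced sine condition \eqref{eq:multidFsineStrong} applied to $F$ at $e_1 \in \ZZ^n$, yields $G(y) \sin \rnin{b}{y} \in L^p(\RR^n)$. Finally, $G(y + a) - G(y) = F(Cy + \tilde a) - F(Cy)$ with $\tilde a \in \pi \ZZ^n$ belongs to $L^p$ by the enhanced shift condition \eqref{eq:multidFshiftStrong}.

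The only genuinely delicate step is the perpendicular case $m = 0$, where the naive target $\tilde a = 0$ is useless; there we are forced to direct $a$ to $\pi e_2$ instead, and it is exactly this linear-algebraic maneuver that ties the claim in that case to the hypothesis $n \geq 2$.
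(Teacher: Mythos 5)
Your proof is correct, and it takes a genuinely different (and in one respect tighter) route than the paper's. The paper splits into two cases according to whether $a$ and $b$ are linearly dependent: in the dependent case it normalizes $a = m\pi e_1$, $b = e_1$ and tensors the one--dimensional counterexample \eqref{eq:fdef} with an integrable profile $\varphi(x^{\prime})$ in the remaining variables; in the independent case it normalizes $b = e_1$, $a = m\pi e_1 + \tau e_2$ and builds a fresh two--dimensional simplex construction on the skewed lattice $\pi \ZZ e_1 + \tau \ZZ e_2$, again tensored with an $L^p$ factor, remarking that ``all technicalities are already done'' in Example~\ref{example:lattices}. You instead construct a single invertible matrix $C$ --- first row $b^{T}$ so that $C^{-T}b = e_1$, a row proportional to $a^{T}$ to carry $a$ to $\pi e_2$ in the perpendicular case $\rnin{a}{b}=0$, and the rest a basis of $a^{\perp}$ --- which simultaneously straightens $b$ (dually) to $e_1$ and sends $a$ into $\pi\ZZ^n$, and then pull back the full $n$--dimensional lattice counterexample $F$ of Example~\ref{example:lattices} by $G = F\circ C$. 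Your linear algebra checks out (invertibility follows from $b\notin a^{\perp}$ when $m\neq 0$ and from $a\notin a^{\perp}$ when $m=0$), and the three properties of $G$ do follow by change of variables from \eqref{eq:multidFbad}, \eqref{eq:multidFsingood} with $j=1$, and \eqref{eq:multidFshiftStrong}. What your reduction buys is that it literally reuses the already--verified estimates of Example~\ref{example:lattices} on the standard lattice, rather than leaving the reader to redo them on a skewed two--dimensional lattice; what it costs is the explicit construction of $C$, including the delicate $m=0$ maneuver (which, as you note, forces $n\ge 2$). Your dispatch of the degenerate cases $a=0$ or $b=0$ is only sketched (``bounded bump'' should be an $L^p$ profile in the transverse variables, and $F\equiv 1$ suffices when $b=0$), but these cases are easy and the paper passes over them silently as well.
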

\begin{proof}
If vectors $a, b \in \RR^n$ are linearly dependent we can assume (compare a rescaling in Proposition~\ref{prop:generelltwo}; use \eqref{eq:multiddilate} if necessary) that $a = m\pi e_1$, $b = e_1$.  Then a function
\[
F(x) = f(x_1) \cdot \varphi(x^{\prime}), \quad x^{\prime} = (x_2, \dotsc, x_n),
\]
where $f \in \eqref{eq:fdef}$ and $\varphi \in L^p(\RR^{n-1})$, say, $\varphi = \left( 1 + \sum_{j = 2}^n x_j^2 \right)^{-n}$, satisfies the conditions \eqref{eq:singlesFbad} -- \eqref{eq:singlesFshiftsgood}.
If $a$, $b$ are linearly independent, we can assume
\[
b = e_1, \quad a = m \pi e_1 + \tau e_2, \quad \tau > 0,
\]
and choose 
\[
F(x) = f(x_1, x_2) \varphi( x^{\prime \prime}),\quad  x^{\prime \prime} = (x_3, \dotsc, x_n),
\]
where $\varphi = \left( 1 + \sum_{j = 3}^n x_j^2 \right)^{-n}$ and 
\[
f = \begin{cases} 1, & (x_1, x_2) \in I_{\lambda}, \quad \lambda = (\ell_1, \ell_2) \in \ZZ^2\\
0, & \text{otherwise,} \end{cases}
\]
with
\[
\begin{gathered}
I_{\lambda} = \bracepair{\ell_1(\pi e_1) + \ell_2 (\tau e_2) + \ansimplex{r_{\lambda}}{2}},\\
r_{\lambda} = R(1 + \abs{\ell_1} + \abs{\ell_2}), \quad R(x) = \frac{1}{x}, \, \,  x \geq 1.
\end{gathered}
\]
All technicalities to explain \eqref{eq:singlesFbad} -- \eqref{eq:singlesFshiftsgood} are already done in Example~\ref{example:lattices}.  
\end{proof}
After Examples~\ref{example:singletons}, \ref{example:lattices} and Claim~\ref{clm:singletons} it would be interesting to describe all assertive (for $L^p$, $1 \leq p < \infty$) pairs $A, B \subset \RR^n$.  It seems reasonable to conjecture that \textit{the pair $A, B \subset \RR^n$ is assertive if and only if there are vectors $a \in A$, $b \in B$ such that $\rnin{a}{b} \not\in \pi \ZZ$.}  If $n = 1$ this is the statement of Proposition~\ref{prop:genconclusion}.  For any $n > 1$ the ``if'' is explained in Example~\ref{example:singletons}.

\section{Acknowledgements}  Part of this work was done when I was participating in the conference ``25th St. Petersburg Summer Meeting in Mathematical Analysis: Tribute to Victor Havin, 1933-2015'' ;  I thank the organizers for their support.  

I want also to thank Nikolai Nikolski who on June 30 chaired the Meeting's session and gave me the opportunity to present the above solution (Section~\ref{sect:proof}) during the session immediately after Vershik's talk.

Moreover, I am grateful to Maria Roginskaya and Nadia Zalesskaya, who helped me put in order the proof and its presentation while the speaker was talking, and to Haakan Hedenmalm, who \foreignlanguage{russian}{кричал из зала ``Давай подробности!''}, i.e. he insisted on my giving all the details.

\pagebreak

\section{Translation of the epigraph (as given in \cite{Akhmatova})} \label{sect:translation} \mbox{} \\
If you only knew what kind of trash\\
Poems shamelessly grow in:\\
Like weeds under the fence,\\ 
Like crabgrass, dandelions.\\
A. Akhmatova, ``Secrets of the Trade.''
\nocite{*}

\printbibliography

\end{document}